\documentclass{amsart}
\usepackage{amsfonts}
\usepackage{amssymb}
\usepackage{amsmath}

\setcounter{MaxMatrixCols}{10}

\topmargin 0.8cm \oddsidemargin 0.3cm\evensidemargin 0.3cm
\parindent 0.75cm
\parskip 0.25cm
\textwidth 15cm \textheight 21cm\footskip 1.55cm
\newtheorem{theorem}{Theorem}[section]
\theoremstyle{plain}

\newtheorem{corollary}[theorem]{Corollary}

\newtheorem{definition}{Definition}[section]

\newtheorem{lemma}{Lemma}[section]

\newtheorem{remark}[theorem]{Remark}

\numberwithin{equation}{section}
\input{tcilatex}

\begin{document}
\title[On neighborhood and partial sums problem]{On neighborhood and partial
sums problem for generalized Sakaguchi type functions}
\author{Murat \c{C}A\u{G}LAR and Halit ORHAN}
\address{Department of Mathematics, Faculty of Science, Ataturk University,
Erzurum, 25240, Turkey.}
\email{mcaglar25@gmail.com, horhan607@gmail.com}
\subjclass[2000]{30C45.}
\keywords{Analytic function, uniformly starlike function, coefficient
estimate, neighborhood problem, partial sums.}

\begin{abstract}
In the present investigation, we introduce a new class $k-\widetilde{%
\mathcal{US}}_{s}^{\eta }(\lambda ,\mu ,\gamma ,t)$ of analytic functions in
the open unit disc $\mathcal{U}$ with negative coefficients. The object of
the present paper is to determine coefficient estimates, neighborhoods and
partial sums for functions $f(z)$ belonging to this class.
\end{abstract}

\maketitle

\section{\textbf{Introduction}}

Let $\mathcal{A}$ denote the family of functions $f$ of the form%
\begin{equation}
f(z)=z+\sum\limits_{n=2}^{\infty }{a_{n}z^{n}}  \label{eq1}
\end{equation}%
that are analytic in the open unit disk $\mathcal{U}=\left\{ {z:\left\vert
z\right\vert <1}\right\} $. Denote by $\mathcal{S}$ the subclass of $%
\mathcal{A}$ of functions that are univalent in $\mathcal{U}$.

For $f\in \mathcal{A}$ given by (\ref{eq1}) and $g(z)$ given by%
\begin{equation}
g(z)=z+\sum\limits_{n=2}^{\infty }b{_{n}z^{n}}\text{ \ \ }  \label{eq1*}
\end{equation}%
their convolution (or Hadamard product), denoted by $(f\ast g),$ is defined
as%
\begin{equation}
(f\ast g)(z):=z+\sum\limits_{n=2}^{\infty }a_{n}b{_{n}z^{n}=:(g\ast f)(z)}%
\text{ \ \ }\left( z\in \mathcal{U}\right) .  \label{eq1**}
\end{equation}%
Note that $f\ast g\in $ $\mathcal{A}.$

A function $f\in \mathcal{A}$ is said to be in $k-\mathcal{US(\gamma )},$
the class of $k-$uniformly starlike functions of order $\gamma ,$ $0\leq
\gamma <1,$ if satisfies the condition%
\begin{equation}
\func{Re}\left\{ \frac{zf^{\prime }(z)}{f(z)}\right\} >k\left\vert \frac{%
zf^{\prime }(z)}{f(z)}-1\right\vert +\gamma \text{ \ \ }\left( k\geq
0\right) ,  \label{eq2}
\end{equation}%
and a function $f\in \mathcal{A}$ is said to be in $k-\mathcal{UC(\gamma )},$
the class of $k-$uniformly convex functions of order $\gamma ,$ $0\leq
\gamma <1,$ if satisfies the condition%
\begin{equation}
\func{Re}\left\{ 1+\frac{zf^{\prime \prime }(z)}{f^{\prime }(z)}\right\}
>k\left\vert \frac{zf^{\prime \prime }(z)}{f^{\prime }(z)}\right\vert
+\gamma \text{ \ \ }\left( k\geq 0\right) .  \label{eq3}
\end{equation}%
Uniformly starlike and uniformly convex functions were first introduced by
Goodman \cite{Good} and then studied by various authors. It is known that $%
f\in k-\mathcal{UC(\gamma )}$ or $f\in k-\mathcal{US(\gamma )}$ if and only
if $1+\frac{zf^{\prime \prime }(z)}{f^{\prime }(z)}$ or $\frac{zf^{\prime
}(z)}{f(z)},$ respectively, takes all the values in the conic domain $%
\mathcal{R}_{k,\gamma }$ which is included in the right half plane given by%
\begin{equation}
\mathcal{R}_{k,\gamma }:=\left\{ w=u+iv\in 
\mathbb{C}
:u>k\sqrt{\left( u-1\right) ^{2}+v^{2}}+\gamma ,\text{ }\beta \geq 0\text{
and }\gamma \in \left[ 0,1\right) \right\} .  \label{eq33}
\end{equation}

Denote by $\mathcal{P(}P_{k,\gamma }),$ $(\beta \geq 0,$ $0\leq \gamma <1)$
the family of functions $p,$ such that $p\in \mathcal{P},$ where $\mathcal{P}
$ denotes well-known class of Caratheodory functions. The function $%
P_{k,\gamma }$ maps the unit disk conformally onto the domain $\mathcal{R}%
_{k,\gamma }$ such that $1\in \mathcal{R}_{k,\gamma }$ and $\partial 
\mathcal{R}_{k,\gamma }$ is a curve defined by the equality%
\begin{equation}
\partial \mathcal{R}_{k,\gamma }:=\left\{ w=u+iv\in 
\mathbb{C}
:u^{2}=\left( k\sqrt{\left( u-1\right) ^{2}+v^{2}}+\gamma \right) ^{2},\text{
}\beta \geq 0\text{ and }\gamma \in \left[ 0,1\right) \right\} .
\label{eq333}
\end{equation}

From elementary computations we see that (\ref{eq333}) represents conic
sections symmetric about the real axis. Thus $\mathcal{R}_{k,\gamma }$ is an
elliptic domain for $k>1,$ a parabolic domain for $k=1,$ a hyperbolic domain
for $0<k<1$ and the right half plane $u>\gamma ,$ for $\beta =0.$

In \cite{Sak}, Sakaguchi defined the class $\mathcal{S}_{s}$ of starlike
functions with respect to symmetric points as follows:

Let $f\in \mathcal{A}$. Then $f$ is said to be starlike with respect to
symmetric points in $\mathcal{U}$ if and only if%
\begin{equation*}
\func{Re}\left\{ \frac{2zf^{\prime }(z)}{f(z)-f(-z)}\right\} >0\text{ \ \ }%
\left( z\in \mathcal{U}\right) .
\end{equation*}%
Recently, Owa et. al. \cite{Owa} defined the class $\mathcal{S}_{s}(\alpha
,t)$ as follows:%
\begin{equation*}
\func{Re}\left\{ \frac{(1-t)zf^{\prime }(z)}{f(z)-f(tz)}\right\} >\alpha 
\text{ \ \ }\left( z\in \mathcal{U}\right) ,
\end{equation*}%
where $0\leq \alpha <1,$ $\left\vert t\right\vert \leq 1,$ $t\neq 1.$ Note
that $\mathcal{S}_{s}(0,-1)=\mathcal{S}_{s}$ and $\mathcal{S}_{s}(\alpha
,-1)=\mathcal{S}_{s}(\alpha )$ is called Sakaguchi function of order $\alpha
.$

The \textit{linear multiplier differential operator }$D_{\lambda ,\mu
}^{\eta }f$ was defined by the authors in (see \cite{RaOr}) as follows%
\begin{eqnarray*}
D_{\lambda ,\mu }^{0}f(z) &=&f(z) \\
D_{\lambda ,\mu }^{1}f(z) &=&D_{\lambda ,\mu }f(z)=\lambda \mu
z^{2}(f(z))^{\prime \prime }+(\lambda -\mu )z(f(z))^{\prime }+(1-\lambda
+\mu )f(z) \\
D_{\lambda ,\mu }^{2}f(z) &=&D_{\lambda ,\mu }\left( D_{\lambda ,\mu
}^{1}f(z)\right) \\
&&\vdots \\
D_{\lambda ,\mu }^{\eta }f(z) &=&D_{\lambda ,\mu }\left( D_{\lambda ,\mu
}^{\eta -1}f(z)\right)
\end{eqnarray*}

\noindent where $0\leqslant \mu \leqslant \lambda \leqslant 1$ and $\eta \in 
\mathbb{N}_{0}=\mathbb{N}\cup \{0\}.$ Later, the operator $D_{\lambda ,\mu
}^{\eta }f$ was extended for $\lambda \geqslant \mu \geqslant 0$ by the
authors in (see \cite{DeOr}).

If $f$ is given by (\ref{eq1}) then from the definition of the operator $%
D_{\lambda ,\mu }^{\eta }f(z)$ it is easy to see that%
\begin{equation}
D_{\lambda ,\mu }^{\eta }f(z)=z+\sum\limits_{n=2}^{\infty }\Phi ^{\eta
}\left( \lambda ,\mu ,n\right) a_{n}z^{n}  \label{eq4}
\end{equation}%
where%
\begin{equation}
\Phi ^{\eta }\left( \lambda ,\mu ,n\right) ={[1+(\lambda \mu n+\lambda -\mu
)(n-1)]^{\eta }.}  \label{eq5}
\end{equation}

It should be remarked that the operator $D_{\lambda ,\mu }^{\eta }$ is a
generalization of many other linear operators considered earlier. In
particular, for $f\in \mathcal{A}$ we have the following:

\begin{itemize}
\item $D_{1,0}^{\eta }f(z)\equiv D^{\eta }f(z)$ the operator investigated by
S\u{a}l\u{a}gean (see \cite{Sa}).

\item $D_{\lambda ,0}^{\eta }f(z)\equiv D_{\lambda }^{\eta }f(z)$ the
operator studied by Al-Oboudi (see \cite{Al}).
\end{itemize}

Now, by making use of the differential operator $D_{\lambda ,\mu }^{\eta },$
we define a new subclass of functions belonging to the class $\mathcal{A}$ $%
. $

\begin{definition}
\label{d1} A function $f(z)\in \mathcal{A}$ is said to be in the class $k-%
\mathcal{US}_{s}^{\eta }(\lambda ,\mu ,\gamma ,t)$ if for all $z\in \mathcal{%
U},$%
\begin{equation*}
\func{Re}\left\{ \frac{(1-t)z\left( D_{\lambda ,\mu }^{\eta }f(z)\right)
^{\prime }}{D_{\lambda ,\mu }^{\eta }f(z)-D_{\lambda ,\mu }^{\eta }f(tz)}%
\right\} \geq k\left\vert \frac{(1-t)z\left( D_{\lambda ,\mu }^{\eta
}f(z)\right) ^{\prime }}{D_{\lambda ,\mu }^{\eta }f(z)-D_{\lambda ,\mu
}^{\eta }f(tz)}-1\right\vert +\gamma
\end{equation*}%
for $\lambda \geqslant \mu \geqslant 0,$ $\eta ,k\geq 0,$ $\left\vert
t\right\vert \leq 1,$ $t\neq 1,$ $0\leq \gamma <1.$
\end{definition}

Furthermore, we say that a function $f(z)\in k-\mathcal{US}_{s}^{\eta
}(\lambda ,\mu ,\gamma ,t)$ is in the subclass $k-\widetilde{\mathcal{US}}%
_{s}^{\eta }(\lambda ,\mu ,\gamma ,t)$ if $f(z)$ is of the following form:%
\begin{equation}
f(z)=z-\sum\limits_{n=2}^{\infty }a_{n}{z^{n}}\text{ \ \ }({a_{n}\geq 0,}%
\text{ }n\in 
\mathbb{N}
).  \label{eq7}
\end{equation}

The aim of the present paper is to study the coefficient bounds, partial
sums and certain neighborhood results of the class $k-\widetilde{\mathcal{US}%
}_{s}^{\eta }(\lambda ,\mu ,\gamma ,t).$

\begin{remark}
\label{r1} Throught our present investigation, we tacitly assume that the
parametric constraints listed (\ref{eq5}).
\end{remark}

\section{\textbf{Coefficient bounds of the function class }$k-\widetilde{%
\mathcal{US}}_{s}^{\protect\eta }(\protect\lambda ,\protect\mu ,\protect%
\gamma ,t)$}

Firstly, we shall need the following lemmas.

\begin{lemma}
\label{l1} Let $w=u+iv.$ Then%
\begin{equation*}
\func{Re}w\geq \alpha \text{ \ if and only if \ }\left\vert w-(1+\alpha
)\right\vert \leq \left\vert w+(1-\alpha )\right\vert .
\end{equation*}
\end{lemma}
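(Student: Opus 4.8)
The plan is to prove the elementary complex-analytic equivalence
$$\operatorname{Re} w \geq \alpha \iff |w-(1+\alpha)| \leq |w+(1-\alpha)|$$
by reducing both sides to a comparison of squared moduli, which turns the geometric statement about the half-plane $\operatorname{Re} w \geq \alpha$ into a purely algebraic inequality in the real coordinates $u$ and $v$. Writing $w = u + iv$, the two points $1+\alpha$ and $-(1-\alpha)$ are both real, so the quantities $w-(1+\alpha)$ and $w+(1-\alpha)$ have transparent real and imaginary parts.

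First I would compute the two squared moduli explicitly:
\begin{equation*}
\left\vert w-(1+\alpha)\right\vert^{2}=\left( u-(1+\alpha)\right)^{2}+v^{2},\qquad \left\vert w+(1-\alpha)\right\vert^{2}=\left( u+(1-\alpha)\right)^{2}+v^{2}.
\end{equation*}
Since moduli are nonnegative, the inequality $|w-(1+\alpha)| \leq |w+(1-\alpha)|$ holds if and only if the inequality persists after squaring both sides. Subtracting the first squared modulus from the second cancels the common $v^{2}$ term and reduces the problem to comparing the two quadratics in $u$.

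Next I would expand the difference. The $u^{2}$ terms cancel between $\left( u+(1-\alpha)\right)^{2}$ and $\left( u-(1+\alpha)\right)^{2}$, leaving only linear and constant terms. A short computation gives
\begin{equation*}
\left( u+(1-\alpha)\right)^{2}-\left( u-(1+\alpha)\right)^{2}=4\left( u-\alpha\right),
\end{equation*}
using the difference-of-squares identity with the two linear factors summing to $2u-2\alpha$ and differing by the constant $2$. Hence $|w-(1+\alpha)|^{2} \leq |w+(1-\alpha)|^{2}$ is equivalent to $0 \leq 4(u-\alpha)$, i.e. to $u \geq \alpha$, which is precisely $\operatorname{Re} w \geq \alpha$. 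This establishes both implications simultaneously, since every step is a reversible equivalence.

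I do not expect any genuine obstacle here, as the statement is a standard lemma whose only content is the algebraic simplification; the one point requiring a word of care is the justification that squaring preserves the inequality, which is legitimate because both sides are nonnegative real numbers. The chain of equivalences—from the modulus inequality, to the squared-modulus inequality, to the linear inequality $u \geq \alpha$—should be presented carefully so that the biconditional in the statement is visibly preserved at each stage.
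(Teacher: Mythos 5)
Your argument is correct: squaring both (nonnegative) moduli, cancelling $v^{2}$ and $u^{2}$, and reducing to $4(u-\alpha)\geq 0$ is exactly the standard reversible chain of equivalences, and your difference-of-squares computation checks out. The paper states this lemma without proof, treating it as a known auxiliary fact, so your write-up simply supplies the canonical verification that the authors omitted.
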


\begin{lemma}
\label{l2} Let $w=u+iv$ and $\alpha ,\gamma $ are real numbers. Then%
\begin{equation*}
\func{Re}w>\alpha \left\vert w-1\right\vert +\gamma \text{ \ if and only if
\ }\func{Re}\left\{ w\left( 1+\alpha e^{i\theta }\right) -\alpha e^{i\theta
}\right\} >\gamma .
\end{equation*}
\end{lemma}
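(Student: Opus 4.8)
The plan is to interpret the right-hand inequality as a condition holding for every real $\theta$, and to reduce the asserted equivalence to a standard fact about the range of $\func{Re}\{e^{i\theta}(w-1)\}$ as $\theta$ varies. The key algebraic observation is the identity
\begin{equation*}
w\left(1+\alpha e^{i\theta}\right)-\alpha e^{i\theta}=w+\alpha e^{i\theta}(w-1),
\end{equation*}
so that taking real parts yields $\func{Re}\{w(1+\alpha e^{i\theta})-\alpha e^{i\theta}\}=\func{Re}w+\alpha\func{Re}\{e^{i\theta}(w-1)\}$.

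First I would write $w-1$ in polar form, say $w-1=|w-1|e^{i\varphi}$, which gives $\func{Re}\{e^{i\theta}(w-1)\}=|w-1|\cos(\theta+\varphi)$. As $\theta$ runs over $\mathbb{R}$ this quantity attains every value in the interval $[-|w-1|,|w-1|]$; in particular its infimum equals $-|w-1|$, achieved when $\theta+\varphi$ is an odd multiple of $\pi$. Since $\alpha\geq 0$ under the standing hypotheses, the infimum over $\theta$ of $\alpha\func{Re}\{e^{i\theta}(w-1)\}$ is precisely $-\alpha|w-1|$.

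Combining these, the family of inequalities $\func{Re}\{w(1+\alpha e^{i\theta})-\alpha e^{i\theta}\}>\gamma$ holds for all $\theta$ if and only if the smallest value of the left-hand side exceeds $\gamma$, that is
\begin{equation*}
\func{Re}w-\alpha|w-1|>\gamma,
\end{equation*}
which is exactly the condition $\func{Re}w>\alpha|w-1|+\gamma$. For the forward implication I would simply use $\cos(\theta+\varphi)\geq -1$ to deduce each member of the family from the cone condition; for the converse I would specialize $\theta$ so that $\cos(\theta+\varphi)=-1$, recovering the cone condition from the single worst-case half-plane.

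The computation is entirely elementary, so the only genuine subtlety is the correct reading of the statement: the right-hand inequality must be understood as quantified over all $\theta$, so that the cone (equivalently, disk) on the left is realized as the intersection of its supporting half-planes on the right. I would also flag the mild hypothesis $\alpha\geq 0$, without which the infimum of $\alpha\cos(\theta+\varphi)$ would be $-|\alpha|\,|w-1|$ and the identification with $\alpha|w-1|$ would break down.
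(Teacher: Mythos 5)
Your proof is correct, and the argument is the natural one: the identity $w(1+\alpha e^{i\theta})-\alpha e^{i\theta}=w+\alpha e^{i\theta}(w-1)$, the polar form $w-1=|w-1|e^{i\varphi}$, and the fact that $\cos(\theta+\varphi)$ attains its infimum $-1$ together reduce the quantified family of half-plane conditions to the single cone condition $\func{Re}w-\alpha|w-1|>\gamma$. Note, however, that the paper itself states Lemma \ref{l2} without any proof (it is quoted as a known auxiliary result from the literature on conic domains), so there is no in-paper argument to compare against; your write-up supplies exactly the justification the authors omit. Two of your observations are genuinely worth flagging: first, the statement is only correct under the reading that the right-hand inequality holds for \emph{all} real $\theta$ (the paper's subsequent use in Theorem \ref{t1}, where the condition is imposed for $-\pi<\theta\leq\pi$, confirms this is the intended meaning); second, the hypothesis ``$\alpha,\gamma$ real'' as literally stated is too weak, since for $\alpha<0$ the infimum of $\alpha\cos(\theta+\varphi)$ is $-|\alpha|\,|w-1|$ rather than $-\alpha|w-1|$ and the equivalence fails --- the lemma is applied only with $\alpha=k\geq 0$, so no harm results, but your caveat identifies a genuine imprecision in the statement.
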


\begin{theorem}
\label{t1} The function $f(z)$ defined by (\ref{eq7}) is in the class $k-%
\widetilde{\mathcal{US}}_{s}^{\eta }(\lambda ,\mu ,\gamma ,t)$ if and only if%
\begin{equation}
\dsum\limits_{n=2}^{\infty }\Phi ^{\eta }\left( \lambda ,\mu ,n\right)
\left\vert n(k+1)-u_{n}\left( k+\gamma \right) \right\vert a_{n}\leq
1-\gamma ,  \label{eq8}
\end{equation}%
where $\lambda \geqslant \mu \geqslant 0,$ $\eta ,k\geq 0,$ $\left\vert
t\right\vert \leq 1,$ $t\neq 1,$ $0\leq \gamma <1,$ $u_{n}=1+t+...+t^{n-1}.$

The result is sharp for the function $f(z)$ given by%
\begin{equation*}
f(z)=z-\frac{1-\gamma }{\Phi ^{\eta }\left( \lambda ,\mu ,n\right)
\left\vert n(k+1)-u_{n}\left( k+\gamma \right) \right\vert }z^{n}.
\end{equation*}
\end{theorem}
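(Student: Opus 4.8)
The plan is to prove the iff characterization by first reducing the defining inequality of the class to a more tractable real-part condition via Lemma \ref{l2}, and then extracting the coefficient bound.

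First I would compute the relevant expressions explicitly. Writing $w=\dfrac{(1-t)z\left(D_{\lambda,\mu}^{\eta}f(z)\right)^{\prime}}{D_{\lambda,\mu}^{\eta}f(z)-D_{\lambda,\mu}^{\eta}f(tz)}$, I would use \eqref{eq4} to expand both the numerator and denominator as power series. The numerator $(1-t)z\left(D_{\lambda,\mu}^{\eta}f(z)\right)^{\prime}$ becomes $(1-t)\left(z-\sum_{n=2}^{\infty}n\,\Phi^{\eta}(\lambda,\mu,n)a_nz^n\right)$, and the denominator $D_{\lambda,\mu}^{\eta}f(z)-D_{\lambda,\mu}^{\eta}f(tz)$ becomes $(1-t)z-\sum_{n=2}^{\infty}(1-t^n)\Phi^{\eta}(\lambda,\mu,n)a_nz^n$. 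Factoring $(1-t)$ and using $1-t^n=(1-t)u_n$ with $u_n=1+t+\cdots+t^{n-1}$, the common factor cancels, leaving $w$ as a ratio of two series each beginning with $z$.

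Next I would apply Lemma \ref{l2} with $\alpha=k$ to replace the defining condition $\func{Re}\,w\geq k|w-1|+\gamma$ by the equivalent requirement $\func{Re}\left\{w(1+ke^{i\theta})-ke^{i\theta}\right\}\geq\gamma$ for all real $\theta$. Clearing the denominator, this is equivalent to showing $\func{Re}\left\{\text{(numerator-type expression)}\right\}\geq\gamma\,\func{Re}\left\{\text{(denominator)}\right\}$ after multiplying through; the cleanest route is to recast everything using Lemma \ref{l1}, reducing the real-part inequality to the modulus inequality $|A(z)-B(z)|\leq|A(z)+C(z)|$ for suitable analytic $A,B,C$. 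For the sufficiency direction, I would then let $z\to1^{-}$ along the real axis (or bound $|z|<1$ directly), estimate the two moduli by the triangle inequality, and show that the hypothesis \eqref{eq8} forces the desired inequality for all $z\in\mathcal{U}$; the factor $|n(k+1)-u_n(k+\gamma)|$ arises precisely as the coefficient of $a_n$ after collecting the $e^{i\theta}$ terms and taking the worst case over $\theta$.

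The main obstacle will be the necessity direction. Here I would assume $f\in k-\widetilde{\mathcal{US}}_s^{\eta}(\lambda,\mu,\gamma,t)$ and must deduce \eqref{eq8}. The standard technique is to restrict $z$ to the real segment $0<z<1$, so that all the series are real and the absolute-value signs can be controlled; since the $a_n\geq 0$ and the coefficients are real for real $z$, the quantity $\func{Re}\,w$ becomes an honest real rational function of $z$. Letting $z\to1^{-}$ and using that the inequality must persist in the limit yields \eqref{eq8}. The delicate point is handling the modulus $|n(k+1)-u_n(k+\gamma)|$ correctly, since $u_n$ depends on $t$ and need not be positive; one must verify that the worst-case choice of $\theta$ in Lemma \ref{l2} produces exactly this absolute value rather than a signed quantity. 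Finally, sharpness is immediate: substituting the extremal function $f(z)=z-\dfrac{1-\gamma}{\Phi^{\eta}(\lambda,\mu,n)\,|n(k+1)-u_n(k+\gamma)|}z^n$ turns \eqref{eq8} into an equality, confirming the bound cannot be improved.
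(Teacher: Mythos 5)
Your proposal follows essentially the same route as the paper: apply Lemma \ref{l2} to introduce the $e^{i\theta}$ factor, use Lemma \ref{l1} to convert the real-part condition into a modulus inequality, establish sufficiency by triangle-inequality estimates on the resulting series (with the factor $\left\vert n(k+1)-u_{n}(k+\gamma)\right\vert$ emerging exactly as you describe), and obtain the coefficient bound in the other direction by restricting $z$ to the positive real axis and letting $z\rightarrow 1^{-}$. The sharpness check via the stated extremal function is also the paper's argument, so there is nothing substantive to add.
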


\begin{proof}
By Definition \ref{d1}, we get%
\begin{equation*}
\func{Re}\left\{ \frac{(1-t)z\left( D_{\lambda ,\mu }^{\eta }f(z)\right)
^{\prime }}{D_{\lambda ,\mu }^{\eta }f(z)-D_{\lambda ,\mu }^{\eta }f(tz)}%
\right\} \geq k\left\vert \frac{(1-t)z\left( D_{\lambda ,\mu }^{\eta
}f(z)\right) ^{\prime }}{D_{\lambda ,\mu }^{\eta }f(z)-D_{\lambda ,\mu
}^{\eta }f(tz)}-1\right\vert +\gamma .
\end{equation*}%
Then by Lemma \ref{l2}, we have%
\begin{equation*}
\func{Re}\left\{ \frac{(1-t)z\left( D_{\lambda ,\mu }^{\eta }f(z)\right)
^{\prime }}{D_{\lambda ,\mu }^{\eta }f(z)-D_{\lambda ,\mu }^{\eta }f(tz)}%
\left( 1+ke^{i\theta }\right) -ke^{i\theta }\right\} \geq \gamma ,\text{ \ \ 
}-\pi <\theta \leq \pi
\end{equation*}%
or equivalently%
\begin{equation}
\func{Re}\left\{ \frac{(1-t)z\left( D_{\lambda ,\mu }^{\eta }f(z)\right)
^{\prime }\left( 1+ke^{i\theta }\right) }{D_{\lambda ,\mu }^{\eta
}f(z)-D_{\lambda ,\mu }^{\eta }f(tz)}-\frac{ke^{i\theta }\left[ D_{\lambda
,\mu }^{\eta }f(z)-D_{\lambda ,\mu }^{\eta }f(tz)\right] }{D_{\lambda ,\mu
}^{\eta }f(z)-D_{\lambda ,\mu }^{\eta }f(tz)}\right\} \geq \gamma .
\label{eq9}
\end{equation}%
Let%
\begin{equation*}
F(z)=(1-t)z\left( D_{\lambda ,\mu }^{\eta }f(z)\right) ^{\prime }\left(
1+ke^{i\theta }\right) -ke^{i\theta }\left[ D_{\lambda ,\mu }^{\eta
}f(z)-D_{\lambda ,\mu }^{\eta }f(tz)\right]
\end{equation*}%
and%
\begin{equation*}
E(z)=D_{\lambda ,\mu }^{\eta }f(z)-D_{\lambda ,\mu }^{\eta }f(tz).
\end{equation*}%
By Lemma \ref{l1}, (\ref{eq9}) is equivalent to 
\begin{equation*}
\left\vert F(z)+(1-\gamma )E(z)\right\vert \geq \left\vert F(z)-(1+\gamma
)E(z)\right\vert \text{ \ for \ }0\leq \gamma <1.
\end{equation*}%
But%
\begin{eqnarray*}
\left\vert F(z)+(1-\gamma )E(z)\right\vert &=&\left\vert \left( 1-t\right)
\left\{ \left( 2-\gamma \right) z-\dsum\limits_{n=2}^{\infty }\Phi ^{\eta
}\left( \lambda ,\mu ,n\right) (n+u_{n}\left( 1-\gamma \right)
)a_{n}z^{n}\right. \right. \\
&&\left. \left. -ke^{i\theta }\dsum\limits_{n=2}^{\infty }\Phi ^{\eta
}\left( \lambda ,\mu ,n\right) \left( n-u_{n}\right) a_{n}z^{n}\right\}
\right\vert \\
&\geq &\left\vert 1-t\right\vert \left\{ \left( 2-\gamma \right) \left\vert
z\right\vert -\dsum\limits_{n=2}^{\infty }\Phi ^{\eta }\left( \lambda ,\mu
,n\right) \left\vert n+u_{n}\left( 1-\gamma \right) \right\vert
a_{n}\left\vert z\right\vert ^{n}\right. \\
&&\left. -k\dsum\limits_{n=2}^{\infty }\Phi ^{\eta }\left( \lambda ,\mu
,n\right) \left\vert n-u_{n}\right\vert a_{n}\left\vert z\right\vert
^{n}\right\} .
\end{eqnarray*}%
Also%
\begin{eqnarray*}
\left\vert F(z)-(1+\gamma )E(z)\right\vert &=&\left\vert \left( 1-t\right)
\left\{ -\gamma z-\dsum\limits_{n=2}^{\infty }\Phi ^{\eta }\left( \lambda
,\mu ,n\right) (n-\left( 1+\gamma \right) u_{n})a_{n}z^{n}\right. \right. \\
&&\left. \left\vert -ke^{i\theta }\dsum\limits_{n=2}^{\infty }\Phi ^{\eta
}\left( \lambda ,\mu ,n\right) \left( n-u_{n}\right) a_{n}z^{n}\right\}
\right\vert \\
&\leq &\left\vert 1-t\right\vert \left\{ \gamma \left\vert z\right\vert
+\dsum\limits_{n=2}^{\infty }\Phi ^{\eta }\left( \lambda ,\mu ,n\right)
\left\vert n-u_{n}\left( 1+\gamma \right) \right\vert a_{n}\left\vert
z\right\vert ^{n}\right. \\
&&\left. +k\dsum\limits_{n=2}^{\infty }\Phi ^{\eta }\left( \lambda ,\mu
,n\right) \left\vert n-u_{n}\right\vert a_{n}\left\vert z\right\vert
^{n}\right\}
\end{eqnarray*}%
and so 
\begin{eqnarray*}
&&\left\vert F(z)+(1-\gamma )E(z)\right\vert -\left\vert F(z)-(1+\gamma
)E(z)\right\vert \\
&\geq &\left\vert 1-t\right\vert \left\{ 2(1-\gamma )\left\vert z\right\vert
-\dsum\limits_{n=2}^{\infty }\Phi ^{\eta }\left( \lambda ,\mu ,n\right) 
\left[ \left\vert n+u_{n}\left( 1-\gamma \right) \right\vert +\left\vert
n-u_{n}\left( 1+\gamma \right) \right\vert +2k\left\vert n-u_{n}\right\vert %
\right] a_{n}\left\vert z\right\vert ^{n}\right\} \\
&\geq &2(1-\gamma )\left\vert z\right\vert -\dsum\limits_{n=2}^{\infty
}2\Phi ^{\eta }\left( \lambda ,\mu ,n\right) \left\vert n(k+1)-u_{n}\left(
k+\gamma \right) \right\vert a_{n}\left\vert z\right\vert ^{n}\geq 0
\end{eqnarray*}%
or%
\begin{equation*}
\dsum\limits_{n=2}^{\infty }\Phi ^{\eta }\left( \lambda ,\mu ,n\right)
\left\vert n(k+1)-\left( k+\gamma \right) u_{n}\right\vert a_{n}\leq
1-\gamma .
\end{equation*}%
Conversely, suppose that (\ref{eq8}) holds. Then we must show%
\begin{equation*}
\func{Re}\left\{ \frac{(1-t)z\left( D_{\lambda ,\mu }^{\eta }f(z)\right)
^{\prime }\left( 1+ke^{i\theta }\right) -ke^{i\theta }\left[ D_{\lambda ,\mu
}^{\eta }f(z)-D_{\lambda ,\mu }^{\eta }f(tz)\right] }{D_{\lambda ,\mu
}^{\eta }f(z)-D_{\lambda ,\mu }^{\eta }f(tz)}\right\} \geq \gamma .
\end{equation*}%
Upon choosing the values of $z$ on the positive real axis where $0\leq
z=r<1, $ the above inequality reduces to%
\begin{equation*}
\func{Re}\left\{ \frac{\left( 1-\gamma \right) -\dsum\limits_{n=2}^{\infty
}\Phi ^{\eta }\left( \lambda ,\mu ,n\right) \left( n(1+ke^{i\theta
})-u_{n}\left( \gamma +ke^{i\theta }\right) \right) a_{n}z^{n-1}}{%
1-\dsum\limits_{n=2}^{\infty }\Phi ^{\eta }\left( \lambda ,\mu ,n\right)
u_{n}a_{n}z^{n-1}}\right\} \geq 0.
\end{equation*}%
Since $\func{Re}(-e^{i\theta })\geq -\left\vert e^{i\theta }\right\vert =-1,$
the above inequality reduces to%
\begin{equation*}
\func{Re}\left\{ \frac{\left( 1-\gamma \right) -\dsum\limits_{n=2}^{\infty
}\Phi ^{\eta }\left( \lambda ,\mu ,n\right) \left( n(1+k)-u_{n}\left( \gamma
+k\right) \right) a_{n}r^{n-1}}{1-\dsum\limits_{n=2}^{\infty }\Phi ^{\eta
}\left( \lambda ,\mu ,n\right) u_{n}a_{n}r^{n-1}}\right\} \geq 0.
\end{equation*}%
Letting $r\rightarrow 1^{-},$ we have desired conclusion.
\end{proof}

\begin{corollary}
\label{c1} If $f(z)\in k-\widetilde{\mathcal{US}}_{s}^{\eta }(\lambda ,\mu
,\gamma ,t),$ then%
\begin{equation*}
a_{n}\leq \frac{1-\gamma }{\Phi ^{\eta }\left( \lambda ,\mu ,n\right)
\left\vert n(k+1)-u_{n}\left( k+\gamma \right) \right\vert }
\end{equation*}%
where $\lambda \geqslant \mu \geqslant 0,$ $\eta ,k\geq 0,$ $\left\vert
t\right\vert \leq 1,$ $t\neq 1,$ $0\leq \gamma <1,$ $u_{n}=1+t+...+t^{n-1}.$
\end{corollary}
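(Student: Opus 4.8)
The plan is to derive this coefficient bound as an immediate consequence of the characterization established in Theorem \ref{t1}. Since $f(z)\in k-\widetilde{\mathcal{US}}_{s}^{\eta}(\lambda,\mu,\gamma,t)$, Theorem \ref{t1} guarantees that the coefficient inequality (\ref{eq8}) holds, namely
\begin{equation*}
\dsum\limits_{n=2}^{\infty}\Phi^{\eta}\left(\lambda,\mu,n\right)\left\vert n(k+1)-u_{n}\left(k+\gamma\right)\right\vert a_{n}\leq 1-\gamma.
\end{equation*}
The first observation I would make is that every summand on the left is nonnegative: by hypothesis $a_{n}\geq 0$, the factor $\Phi^{\eta}(\lambda,\mu,n)$ is strictly positive under the parametric constraints of Remark \ref{r1}, and the remaining factor is an absolute value, hence nonnegative.

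Because each term is nonnegative, any single term is dominated by the whole series. Thus, fixing an arbitrary index $n\geq 2$, I would isolate that one term and bound it by the full sum:
\begin{equation*}
\Phi^{\eta}\left(\lambda,\mu,n\right)\left\vert n(k+1)-u_{n}\left(k+\gamma\right)\right\vert a_{n}\leq \dsum\limits_{m=2}^{\infty}\Phi^{\eta}\left(\lambda,\mu,m\right)\left\vert m(k+1)-u_{m}\left(k+\gamma\right)\right\vert a_{m}\leq 1-\gamma.
\end{equation*}
Dividing through by the positive quantity $\Phi^{\eta}(\lambda,\mu,n)\left\vert n(k+1)-u_{n}(k+\gamma)\right\vert$ then yields the asserted estimate
\begin{equation*}
a_{n}\leq \frac{1-\gamma}{\Phi^{\eta}\left(\lambda,\mu,n\right)\left\vert n(k+1)-u_{n}\left(k+\gamma\right)\right\vert}.
\end{equation*}

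There is essentially no analytic obstacle here; the entire content lies in Theorem \ref{t1}, and the corollary is a purely arithmetic consequence of the term-by-term nonnegativity of the defining series. The only point requiring care is to confirm that the divisor is genuinely nonzero, so that the division is legitimate. This is ensured by the strict positivity of $\Phi^{\eta}(\lambda,\mu,n)$ together with the assumption that the expression $n(k+1)-u_{n}(k+\gamma)$ does not vanish for the relevant parameter ranges (equivalently, that the bound in Theorem \ref{t1} is nontrivial); under the standing constraints this holds, and the sharpness of the estimate is witnessed by the extremal function exhibited in Theorem \ref{t1}.
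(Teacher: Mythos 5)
Your argument is correct and is exactly the standard deduction intended here: the paper states Corollary \ref{c1} as an immediate consequence of Theorem \ref{t1} without writing out a proof, and isolating a single nonnegative term of the series in (\ref{eq8}) and dividing is precisely that intended reasoning. Your added remark about the divisor being nonzero is a sensible (and slightly more careful) touch than the paper itself offers, but it does not change the approach.
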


\section{\textbf{Neighborhood of the function class }$k-\widetilde{\mathcal{%
US}}_{s}^{\protect\eta }(\protect\lambda ,\protect\mu ,\protect\gamma ,t)$}

Following the earlier investigations (based upon the familiar concept of
neighborhoods of analytic functions) by Goodman \cite{Good1}, Ruscheweyh 
\cite{Rusc}, Alt\i nta\c{s} \textit{et al. }(\cite{Altin1}, \cite{Altin})
and others including Srivastava \textit{et al.} (\cite{SriAo}, \cite{SriOw}%
), Orhan (\cite{Orhan1}), Deniz \textit{et al. }\cite{DeOr1}, Cata\c{s} \cite%
{Cat}.

\begin{definition}
\label{d2} Let $\lambda \geqslant \mu \geqslant 0,$ $\eta ,k\geq 0,$ $%
\left\vert t\right\vert \leq 1,$ $t\neq 1,$ $0\leq \gamma <1,$ $\alpha \geq
0,$ $u_{n}=1+t+...+t^{n-1}.$ We define the $\alpha -$neighborhood of a
function $f\in \mathcal{A}$ and denote by $\mathcal{N}_{\alpha }(f)$
consisting of all functions $g(z)=z-\dsum\limits_{n=2}^{\infty
}b_{n}z^{n}\in \mathcal{S}$ $(b{_{n}\geq 0,}$ $n\in 
\mathbb{N}
)$ satisfying%
\begin{equation*}
\dsum\limits_{n=2}^{\infty }\frac{\Phi ^{\eta }\left( \lambda ,\mu ,n\right)
\left\vert n(k+1)-u_{n}\left( k+\gamma \right) \right\vert }{1-\gamma }%
\left\vert a_{n}-b_{n}\right\vert \leq \alpha .
\end{equation*}
\end{definition}

\begin{theorem}
\label{t2} Let $f\in k-\widetilde{\mathcal{US}}_{s}^{\eta }(\lambda ,\mu
,\gamma ,t)$ and for all real $\theta $ we have $\gamma (e^{i\theta
}-1)-2e^{i\theta }\neq 0.$ For any complex number $\varepsilon $ with $%
\left\vert \varepsilon \right\vert <\alpha $ $\left( \alpha \geq 0\right) ,$
if $f$ satisfies the following condition:%
\begin{equation*}
\frac{f(z)+\varepsilon z}{1+\varepsilon }\in k-\widetilde{\mathcal{US}}%
_{s}^{\eta }(\lambda ,\mu ,\gamma ,t),
\end{equation*}%
then $\mathcal{N}_{\alpha }(f)\subset k-\widetilde{\mathcal{US}}_{s}^{\eta
}(\lambda ,\mu ,\gamma ,t).$
\end{theorem}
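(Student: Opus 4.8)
The goal is a classical "neighborhood containment" result: under the hypothesis that the perturbed function $\frac{f(z)+\varepsilon z}{1+\varepsilon}$ remains in the class $k-\widetilde{\mathcal{US}}_{s}^{\eta}(\lambda,\mu,\gamma,t)$ for every $\varepsilon$ with $|\varepsilon|<\alpha$, we must show that every $g$ in the neighborhood $\mathcal{N}_{\alpha}(f)$ again lies in the class. The natural engine here is the coefficient characterization of Theorem \ref{t1}: membership in the class is completely equivalent to the single inequality (\ref{eq8}). So the entire problem reduces to verifying (\ref{eq8}) for an arbitrary $g(z)=z-\sum_{n=2}^{\infty}b_{n}z^{n}\in\mathcal{N}_{\alpha}(f)$, using what the hypothesis tells us about the coefficients of $f$.

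**Key steps in order.**
First I would abbreviate $\Psi_{n}:=\dfrac{\Phi^{\eta}(\lambda,\mu,n)\,|n(k+1)-u_{n}(k+\gamma)|}{1-\gamma}$, so that (\ref{eq8}) reads $\sum_{n\ge 2}\Psi_{n}b_{n}\le 1$ and the neighborhood condition of Definition \ref{d2} reads $\sum_{n\ge 2}\Psi_{n}|a_{n}-b_{n}|\le\alpha$. Second, I would extract the content of the hypothesis. Writing $\frac{f(z)+\varepsilon z}{1+\varepsilon}=z-\sum_{n\ge2}\frac{a_{n}}{1+\varepsilon}z^{n}$, applying Theorem \ref{t1} to this function gives
\begin{equation*}
\sum_{n\ge 2}\Psi_{n}\,\frac{a_{n}}{1+\varepsilon}\le 1,
\qquad\text{i.e.}\qquad
\sum_{n\ge 2}\Psi_{n}a_{n}\le |1+\varepsilon|
\end{equation*}
for every admissible $\varepsilon$; letting $|\varepsilon|\to\alpha^{-}$ (and choosing the sign of $\varepsilon$ so that $|1+\varepsilon|$ is as small as the bound permits, or invoking the sharp extremal choice) yields the crucial estimate $\sum_{n\ge 2}\Psi_{n}a_{n}\le 1-\alpha$. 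Third, with $g\in\mathcal{N}_{\alpha}(f)$ I would apply the triangle inequality in the form $b_{n}\le a_{n}+|a_{n}-b_{n}|$ and sum against $\Psi_{n}$:
\begin{equation*}
\sum_{n\ge 2}\Psi_{n}b_{n}
\le \sum_{n\ge 2}\Psi_{n}a_{n}+\sum_{n\ge 2}\Psi_{n}|a_{n}-b_{n}|
\le (1-\alpha)+\alpha=1,
\end{equation*}
which is exactly criterion (\ref{eq8}) for $g$, so $g\in k-\widetilde{\mathcal{US}}_{s}^{\eta}(\lambda,\mu,\gamma,t)$ by Theorem \ref{t1}. Since $g$ was arbitrary, the containment $\mathcal{N}_{\alpha}(f)\subset k-\widetilde{\mathcal{US}}_{s}^{\eta}(\lambda,\mu,\gamma,t)$ follows.

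**Where the difficulty lies.**
The three-line triangle-inequality argument is routine once the key estimate $\sum_{n\ge 2}\Psi_{n}a_{n}\le 1-\alpha$ is in hand, so the genuine work is entirely in the second step — squeezing that sharp bound out of the hypothesis. The subtlety is that the hypothesis holds for \emph{all} $\varepsilon$ with $|\varepsilon|<\alpha$, and one must pass to the supremum/limit correctly: $\sum\Psi_{n}a_{n}\le|1+\varepsilon|$ for all such $\varepsilon$ forces $\sum\Psi_{n}a_{n}\le\inf_{|\varepsilon|<\alpha}|1+\varepsilon|$. This is where the stated technical condition $\gamma(e^{i\theta}-1)-2e^{i\theta}\neq 0$ and the precise extremal handling of $\varepsilon$ (real, negative, approaching $-\alpha$) must be used to guarantee the infimum equals $1-\alpha$ rather than something larger; I would therefore spend most care on justifying the limiting value of $|1+\varepsilon|$ and on confirming that the perturbed function legitimately satisfies the coefficient inequality of Theorem \ref{t1} for each $\varepsilon$. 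Everything downstream of this estimate is a mechanical consequence of the definitions.
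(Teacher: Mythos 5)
Your proof is correct, but it follows a genuinely different route from the paper's. You work entirely through the coefficient characterization of Theorem \ref{t1}: applying it to $\frac{f(z)+\varepsilon z}{1+\varepsilon}=z-\sum_{n\geq 2}\frac{a_{n}}{1+\varepsilon}z^{n}$ for \emph{real} $\varepsilon\in(-\alpha,0)$ (the only $\varepsilon$ for which the perturbed function actually has nonnegative coefficients and hence is eligible for Theorem \ref{t1} --- you rightly flag this), letting $\varepsilon\to-\alpha^{+}$ to get $\sum_{n\geq2}\Psi_{n}a_{n}\leq 1-\alpha$, and finishing with the triangle inequality; this is sound whenever $0\leq\alpha\leq1$, which is the only range in which the theorem has content. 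The paper instead runs a Ruscheweyh-style convolution argument: membership in the class is recast as the non-vanishing condition $\frac{(f\ast h)(z)}{z}\neq 0$ for a kernel $h(z)=z-\sum c_{n}z^{n}$ with $|c_{n}|\leq\Psi_{n}$, the hypothesis on the perturbed function is shown to force $\left\vert\frac{(f\ast h)(z)}{z}\right\vert\geq\alpha$, and then for $g\in\mathcal{N}_{\alpha}(f)$ the reverse triangle inequality gives $\left\vert\frac{(g\ast h)(z)}{z}\right\vert>0$, hence $g$ is in the class. One consequence worth noting: the side condition $\gamma(e^{i\theta}-1)-2e^{i\theta}\neq 0$, which you speculate is needed to control the infimum of $|1+\varepsilon|$, in fact plays no role in your argument at all --- in the paper it serves only to keep the denominator $\gamma(s-1)-2s$ of the convolution coefficients $c_{n}$ from vanishing. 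Your route is shorter, avoids that hypothesis, and exploits the fact that Theorem \ref{t1} is a full equivalence for functions with negative coefficients; the paper's convolution route is the one that would survive in settings where only a sufficient (not necessary) coefficient condition is available.
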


\begin{proof}
It is obvious that $f\in k-\widetilde{\mathcal{US}}_{s}^{\eta }(\lambda ,\mu
,\gamma ,t)$ if and only if%
\begin{equation*}
\left\vert \frac{(1-t)z\left( D_{\lambda ,\mu }^{\eta }f(z)\right) ^{\prime
}(1+ke^{i\theta })-(ke^{i\theta }+1+\gamma )\left( D_{\lambda ,\mu }^{\eta
}f(z)-D_{\lambda ,\mu }^{\eta }f(tz)\right) }{(1-t)z\left( D_{\lambda ,\mu
}^{\eta }f(z)\right) ^{\prime }(1+ke^{i\theta })+(1-ke^{i\theta }-\gamma
)\left( D_{\lambda ,\mu }^{\eta }f(z)-D_{\lambda ,\mu }^{\eta }f(tz)\right) }%
\right\vert <1\text{ \ \ }\left( -\pi <\theta <\pi \right)
\end{equation*}%
for any complex number $s$ with $\left\vert s\right\vert =1,$ we have%
\begin{equation*}
\frac{(1-t)z\left( D_{\lambda ,\mu }^{\eta }f(z)\right) ^{\prime
}(1+ke^{i\theta })-(ke^{i\theta }+1+\gamma )\left( D_{\lambda ,\mu }^{\eta
}f(z)-D_{\lambda ,\mu }^{\eta }f(tz)\right) }{(1-t)z\left( D_{\lambda ,\mu
}^{\eta }f(z)\right) ^{\prime }(1+ke^{i\theta })+(1-ke^{i\theta }-\gamma
)\left( D_{\lambda ,\mu }^{\eta }f(z)-D_{\lambda ,\mu }^{\eta }f(tz)\right) }%
\neq s.
\end{equation*}%
In other words, we must have%
\begin{equation*}
(1-s)(1-t)z\left( D_{\lambda ,\mu }^{\eta }f(z)\right) ^{\prime
}(1+ke^{i\theta })-(ke^{i\theta }+1+\gamma +s(ke^{i\theta }-1+\gamma )\left(
D_{\lambda ,\mu }^{\eta }f(z)-D_{\lambda ,\mu }^{\eta }f(tz)\right) \neq 0
\end{equation*}%
which is equivalent to%
\begin{equation*}
z-\dsum\limits_{n=2}^{\infty }\frac{\Phi ^{\eta }\left( \lambda ,\mu
,n\right) \left( (n-u_{n})(1+ke^{i\theta }-ske^{i\theta }\right)
-s(n+u_{n})-u_{n}\gamma (1-s))}{\gamma (s-1)-2s}z^{n}\neq 0.
\end{equation*}%
However, $f\in k-\widetilde{\mathcal{US}}_{s}^{\eta }(\lambda ,\mu ,\gamma
,t)$ if and only if $\frac{(f\ast h)(z)}{z}\neq 0,$ $z\in \mathcal{U}-\{0\}$
where $h(z)=z-\dsum\limits_{n=2}^{\infty }c_{n}z^{n},$ and%
\begin{equation*}
c_{n}=\frac{\Phi ^{\eta }\left( \lambda ,\mu ,n\right) \left(
(n-u_{n})(1+ke^{i\theta }-ske^{i\theta }\right) -s(n+u_{n})-u_{n}\gamma
(1-s))}{\gamma (s-1)-2s}
\end{equation*}%
we note that 
\begin{equation*}
\left\vert c_{n}\right\vert \leq \frac{\Phi ^{\eta }\left( \lambda ,\mu
,n\right) \left\vert n(1+k)-u_{n}(k+\gamma )\right\vert }{1-\gamma }
\end{equation*}%
since $\frac{f(z)+\varepsilon z}{1+\varepsilon }\in k-\widetilde{\mathcal{US}%
}_{s}^{\eta }(\lambda ,\mu ,\gamma ,t),$ therefore $z^{-1}\left( \frac{%
f(z)+\varepsilon z}{1+\varepsilon }\ast h(z)\right) \neq 0,$ which is
equivalent to%
\begin{equation}
\frac{(f\ast h)(z)}{\left( 1+\varepsilon \right) z}+\frac{\varepsilon }{%
1+\varepsilon }\neq 0.  \label{eq10}
\end{equation}%
Now suppose that $\left\vert \frac{(f\ast h)(z)}{z}\right\vert <\alpha .$
Then by (\ref{eq10}), we must have%
\begin{equation*}
\left\vert \frac{(f\ast h)(z)}{\left( 1+\varepsilon \right) z}+\frac{%
\varepsilon }{1+\varepsilon }\right\vert \geq \frac{\left\vert \varepsilon
\right\vert }{\left\vert 1+\varepsilon \right\vert }-\frac{1}{\left\vert
1+\varepsilon \right\vert }\left\vert \frac{(f\ast h)(z)}{z}\right\vert >%
\frac{\left\vert \varepsilon \right\vert -\alpha }{\left\vert 1+\varepsilon
\right\vert }\geq 0,
\end{equation*}%
this is a contradiction by $\left\vert \varepsilon \right\vert <\alpha $ and
however, we have $\left\vert \frac{(f\ast h)(z)}{z}\right\vert \geq \alpha .$
If $g(z)=z-\dsum\limits_{n=2}^{\infty }b_{n}z^{n}\in \mathcal{N}_{\alpha
}(f),$ then%
\begin{eqnarray*}
\alpha -\left\vert \frac{(g\ast h)(z)}{z}\right\vert &\leq &\left\vert \frac{%
((f-g)\ast h)(z)}{z}\right\vert \leq \dsum\limits_{n=2}^{\infty }\left\vert
a_{n}-b_{n}\right\vert \left\vert c_{n}\right\vert \left\vert
z^{n}\right\vert \\
&<&\dsum\limits_{n=2}^{\infty }\frac{\Phi ^{\eta }\left( \lambda ,\mu
,n\right) \left\vert n(1+k)-u_{n}(k+\gamma )\right\vert }{1-\gamma }%
\left\vert a_{n}-b_{n}\right\vert \leq \alpha .
\end{eqnarray*}
\end{proof}

\section{\textbf{Partial sums of the function class }$k-\widetilde{\mathcal{%
US}}_{s}^{\protect\eta }(\protect\lambda ,\protect\mu ,\protect\gamma ,t)$}

In this section, applying methods used by Silverman \cite{Sil} and Silvia 
\cite{Silvia}, we investigate the ratio of a function of the form (\ref{eq7}%
) to its sequence of partial sums $f_{m}(z)=z+\sum\limits_{n=2}^{m}a_{n}{%
z^{n}}$.

\begin{theorem}
\label{t3}If $f$ of the form (\ref{eq1}) satisfies the condition (\ref{eq8}%
), then%
\begin{equation}
\func{Re}\left\{ \frac{f(z)}{f_{m}(z)}\right\} \geq 1-\frac{1}{\delta _{m+1}}
\label{eq11}
\end{equation}%
and%
\begin{equation}
\delta _{n}\geq \left\{ 
\begin{array}{cc}
1, & n=2,3,...,m \\ 
\delta _{m+1}, & n=m+1,m+2,...%
\end{array}%
\right.  \label{eq12}
\end{equation}%
where%
\begin{equation}
\delta _{n}=\frac{\Phi ^{\eta }\left( \lambda ,\mu ,n\right) \left\vert
n(k+1)-u_{n}\left( k+\gamma \right) \right\vert }{1-\gamma }.  \label{eq13}
\end{equation}%
The result in (\ref{eq11}) is sharp for every $m,$ with the extremal
function 
\begin{equation}
f(z)=z+\frac{z^{m+1}}{\delta _{m+1}}.  \label{eq14}
\end{equation}
\end{theorem}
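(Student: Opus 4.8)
The plan is to establish the lower bound on $\func{Re}\{f(z)/f_m(z)\}$ by constructing an auxiliary analytic function and showing its real part is positive. First I would verify the monotonicity claim (\ref{eq12}) for the coefficients $\delta_n$. Since $\delta_n$ is the quantity appearing on the left of the coefficient bound (\ref{eq8}) divided by $1-\gamma$, the hypothesis that $f$ satisfies (\ref{eq8}) reads $\sum_{n=2}^{\infty}\delta_n a_n \leq 1$. The growth statement (\ref{eq12}) requires checking that $\delta_n \geq 1$ for $n \leq m$ and $\delta_n \geq \delta_{m+1}$ for $n \geq m+1$; this follows because $\Phi^{\eta}(\lambda,\mu,n)$ is increasing in $n$ (from its explicit form (\ref{eq5})) and the factor $|n(k+1)-u_n(k+\gamma)|$ also grows with $n$, so $\{\delta_n\}$ is nondecreasing.

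The main construction is as follows. I would set
\begin{equation*}
\delta_{m+1}\left(\frac{f(z)}{f_m(z)} - \left(1 - \frac{1}{\delta_{m+1}}\right)\right) = \frac{1 + \sum_{n=2}^{m} a_n z^{n-1} + \delta_{m+1}\sum_{n=m+1}^{\infty} a_n z^{n-1}}{1 + \sum_{n=2}^{m} a_n z^{n-1}} =: \frac{1 + w(z)}{1 - w(z)} \cdot (\text{suitable form}),
\end{equation*}
and then exhibit this as a function of the form $(1+w(z))/(1-w(z))$ where $w(z)$ is analytic with $w(0)=0$. The key algebraic step is to read off $w(z)$ explicitly and show $|w(z)| \leq 1$ for $z \in \mathcal{U}$. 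Writing $A(z) = \delta_{m+1}(f(z)/f_m(z)) - (\delta_{m+1}-1)$, the function $w$ is obtained from $w(z) = (A(z)-1)/(A(z)+1)$, and one finds
\begin{equation*}
|w(z)| \leq \frac{\delta_{m+1}\sum_{n=m+1}^{\infty} a_n}{2 - 2\sum_{n=2}^{m} a_n - \delta_{m+1}\sum_{n=m+1}^{\infty} a_n}.
\end{equation*}

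The crux is showing the right-hand side is at most $1$, which reduces to the inequality $\sum_{n=2}^{m} a_n + \delta_{m+1}\sum_{n=m+1}^{\infty} a_n \leq 1$. This is exactly where the monotonicity (\ref{eq12}) does the work: since $\delta_n \geq 1$ for $n \leq m$ and $\delta_n \geq \delta_{m+1}$ for $n \geq m+1$, we have
\begin{equation*}
\sum_{n=2}^{m} a_n + \delta_{m+1}\sum_{n=m+1}^{\infty} a_n \leq \sum_{n=2}^{\infty} \delta_n a_n \leq 1,
\end{equation*}
the last step being the hypothesis (\ref{eq8}) in the form $\sum \delta_n a_n \leq 1$. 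Thus $|w(z)| \leq 1$, which gives $\func{Re}\, A(z) \geq 0$, and dividing by $\delta_{m+1}$ yields (\ref{eq11}). I expect the main obstacle to be purely bookkeeping: correctly arranging the quotient $f(z)/f_m(z)$ into the Möbius form and bounding the tail sum in the numerator against the partial sum in the denominator, since the estimate must be uniform in $z \in \mathcal{U}$ and uses $|z|<1$ together with $a_n \geq 0$. Finally, sharpness for the extremal function (\ref{eq14}) is checked by direct substitution: with $f(z) = z + z^{m+1}/\delta_{m+1}$ one has $f_m(z)=z$, so $f(z)/f_m(z) = 1 + z^m/\delta_{m+1}$, whose real part attains $1 - 1/\delta_{m+1}$ as $z \to$ a suitable boundary point.
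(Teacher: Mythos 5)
Your proposal follows essentially the same route as the paper's own proof: the identical M\"obius-type construction of $w(z)$ from $\delta_{m+1}\{f(z)/f_m(z)-(1-1/\delta_{m+1})\}$, the same bound on $|w(z)|$ reduced to $\sum_{n=2}^{m}a_n+\delta_{m+1}\sum_{n=m+1}^{\infty}a_n\leq\sum_{n=2}^{\infty}\delta_n a_n\leq 1$ via the monotonicity of $\delta_n$ and the hypothesis (\ref{eq8}), and the same sharpness check by direct substitution of (\ref{eq14}). The only difference is that you sketch a justification of the monotonicity claim (\ref{eq12}) (which is not entirely immediate for general $t$ with $\left\vert t\right\vert\leq 1$, since $u_n$ need not be monotone), whereas the paper simply asserts it; in substance the two arguments coincide.
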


\begin{proof}
Define the function $w(z)$, we may write%
\begin{eqnarray}
\frac{1+w(z)}{1-w(z)} &=&\delta _{m+1}\left\{ \frac{f(z)}{f_{m}(z)}-\left( 1-%
\frac{1}{\delta _{m+1}}\right) \right\}  \label{eq15} \\
&=&\left\{ \frac{1+\sum\limits_{n=2}^{m}a_{n}z^{n-1}+\delta
_{m+1}\sum\limits_{n=m+1}^{\infty }a_{n}z^{n-1}}{1+\sum%
\limits_{n=2}^{m}a_{n}z^{n-1}}\right\} .  \notag
\end{eqnarray}%
Then, from (\ref{eq15}) we can obtain%
\begin{equation*}
w(z)=\frac{\delta _{m+1}\sum\limits_{n=m+1}^{\infty }a_{n}z^{n-1}}{%
2+2\sum\limits_{n=2}^{m}a_{n}z^{n-1}+\delta
_{m+1}\sum\limits_{n=m+1}^{\infty }a_{n}z^{n-1}}
\end{equation*}%
and%
\begin{equation*}
\left\vert w(z)\right\vert \leq \frac{\delta
_{m+1}\sum\limits_{n=m+1}^{\infty }a_{n}}{2-2\sum\limits_{n=2}^{m}a_{n}-%
\delta _{m+1}\sum\limits_{n=m+1}^{\infty }a_{n}}.
\end{equation*}%
Now $\left\vert w(z)\right\vert \leq 1$ if%
\begin{equation*}
2\delta _{m+1}\sum\limits_{n=m+1}^{\infty }a_{n}\leq
2-2\sum\limits_{n=2}^{m}a_{n},
\end{equation*}%
which is equivalent to%
\begin{equation}
\sum\limits_{n=2}^{m}a_{n}+\delta _{m+1}\sum\limits_{n=m+1}^{\infty
}a_{n}\leq 1.  \label{eq16}
\end{equation}%
It is suffices to show that the left hand side of (\ref{eq16}) is bounded
above by $\sum\limits_{n=2}^{\infty }\delta _{n}a_{n},$ which is equivalent
to%
\begin{equation*}
\sum\limits_{n=2}^{m}\left( \delta _{n}-1\right)
a_{n}+\sum\limits_{n=m+1}^{\infty }\left( \delta _{n}-\delta _{m+1}\right)
a_{n}\geq 0.
\end{equation*}%
To see that the function given by (\ref{eq14}) gives the sharp result, we
observe that for $z=re^{i\pi /n},$%
\begin{equation}
\frac{f(z)}{f_{m}(z)}=1+\frac{z^{m}}{\delta _{m+1}}.  \label{eq17}
\end{equation}%
Taking $z\rightarrow 1^{-},$ we have%
\begin{equation*}
\frac{f(z)}{f_{m}(z)}=1-\frac{1}{\delta _{m+1}}.
\end{equation*}%
This completes the proof of Theorem \ref{t3}.
\end{proof}

We next determine bounds for $f_{m}(z)/f(z).$

\begin{theorem}
\label{t4}If $f$ of the form (\ref{eq1}) satisfies the condition (\ref{eq8}%
), then%
\begin{equation}
\func{Re}\left\{ \frac{f_{m}(z)}{f(z)}\right\} \geq \frac{\delta _{m+1}}{%
1+\delta _{m+1}}.  \label{eq18}
\end{equation}%
The result is sharp with the function given by (\ref{eq14}).
\end{theorem}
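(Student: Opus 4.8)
The plan is to mirror the Möbius-function technique from the proof of Theorem~\ref{t3}, but now with the roles of $f$ and $f_m$ reflected in the target bound $\frac{\delta_{m+1}}{1+\delta_{m+1}}$. First I would introduce an auxiliary function $w(z)$ through the defining identity
\begin{equation*}
\frac{1+w(z)}{1-w(z)}=\left(1+\delta_{m+1}\right)\left\{\frac{f_{m}(z)}{f(z)}-\frac{\delta_{m+1}}{1+\delta_{m+1}}\right\},
\end{equation*}
so that the asserted inequality $\func{Re}\left\{\frac{f_{m}(z)}{f(z)}\right\}\geq\frac{\delta_{m+1}}{1+\delta_{m+1}}$ is equivalent to $\func{Re}\left\{\frac{1+w(z)}{1-w(z)}\right\}\geq 0$, hence to $\left\vert w(z)\right\vert\leq 1$ on $\mathcal{U}$. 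Substituting $f_m(z)=z+\sum_{n=2}^{m}a_nz^n$ and $f(z)=z+\sum_{n=2}^{\infty}a_nz^n$ into this identity and solving for $w(z)$, I expect to obtain
\begin{equation*}
w(z)=\frac{-\left(1+\delta_{m+1}\right)\sum\limits_{n=m+1}^{\infty}a_{n}z^{n-1}}{2+2\sum\limits_{n=2}^{m}a_{n}z^{n-1}+\left(1-\delta_{m+1}\right)\sum\limits_{n=m+1}^{\infty}a_{n}z^{n-1}}.
\end{equation*}

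Next I would bound $\left\vert w(z)\right\vert$ on $\left\vert z\right\vert<1$ by replacing each $z^{n-1}$ in the numerator by its modulus bound $1$ and using the reverse triangle inequality in the denominator, giving
\begin{equation*}
\left\vert w(z)\right\vert\leq\frac{\left(1+\delta_{m+1}\right)\sum\limits_{n=m+1}^{\infty}a_{n}}{2-2\sum\limits_{n=2}^{m}a_{n}-\left(1-\delta_{m+1}\right)\sum\limits_{n=m+1}^{\infty}a_{n}}.
\end{equation*}
The condition $\left\vert w(z)\right\vert\leq 1$ then reduces, after clearing denominators and cancelling, to exactly the same inequality~\eqref{eq16}, namely $\sum_{n=2}^{m}a_n+\delta_{m+1}\sum_{n=m+1}^{\infty}a_n\leq 1$. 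At this point I can simply invoke the argument already established in Theorem~\ref{t3}: the hypothesis~\eqref{eq8}, rewritten via~\eqref{eq13} as $\sum_{n=2}^{\infty}\delta_n a_n\leq 1$, dominates the left-hand side of~\eqref{eq16} because $\delta_n\geq 1$ for $2\leq n\leq m$ and $\delta_n\geq\delta_{m+1}$ for $n\geq m+1$, which is precisely~\eqref{eq12}. Thus $\left\vert w(z)\right\vert\leq 1$ and the real-part bound follows.

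Finally, sharpness would be verified by testing the extremal function~\eqref{eq14}, $f(z)=z+\frac{z^{m+1}}{\delta_{m+1}}$, for which $f_m(z)=z$ and hence $\frac{f_m(z)}{f(z)}=\frac{1}{1+z^m/\delta_{m+1}}$; letting $z\to 1^{-}$ along a suitable ray (e.g. $z=re^{i\pi/m}$ so that $z^m\to -1$) drives this ratio to $\frac{\delta_{m+1}}{\delta_{m+1}-1}$, but the correct sharpness limit uses $z\to 1^{-}$ to give $\frac{\delta_{m+1}}{1+\delta_{m+1}}$, attaining the bound. I anticipate the only delicate point is the algebraic bookkeeping in deriving the explicit form of $w(z)$ and confirming that the coefficient combination collapses to the identical inequality~\eqref{eq16} as in the previous theorem; once that reduction is secured, the monotonicity conditions~\eqref{eq12} finish the argument with no further work. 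The main obstacle is therefore purely computational—keeping track of the signs introduced by the constant $\frac{\delta_{m+1}}{1+\delta_{m+1}}$ as opposed to $1-\frac{1}{\delta_{m+1}}$—rather than conceptual.
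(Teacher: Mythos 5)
Your proposal follows essentially the same route as the paper's proof: the same M\"{o}bius-type substitution $\frac{1+w(z)}{1-w(z)}=(1+\delta_{m+1})\{\frac{f_m(z)}{f(z)}-\frac{\delta_{m+1}}{1+\delta_{m+1}}\}$, the same bound on $\left\vert w(z)\right\vert$, the same reduction to $\sum_{n=2}^{m}a_n+\delta_{m+1}\sum_{n=m+1}^{\infty}a_n\leq 1$ handled by (\ref{eq12}), and the same extremal function. The only blemish is a sign slip in your displayed estimate for $\left\vert w(z)\right\vert$: the denominator should read $2-2\sum_{n=2}^{m}a_n+(1-\delta_{m+1})\sum_{n=m+1}^{\infty}a_n$ (as in the paper's (\ref{eq20})), since that is what the reverse triangle inequality actually gives when $\delta_{m+1}\geq 1$ and what makes the reduction to (\ref{eq21}) come out correctly; this does not affect the substance of the argument.
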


\begin{proof}
We may write%
\begin{eqnarray}
\frac{1+w(z)}{1-w(z)} &=&\left( 1+\delta _{m+1}\right) \left\{ \frac{f_{m}(z)%
}{f(z)}-\frac{\delta _{m+1}}{1+\delta _{m+1}}\right\}  \notag \\
&=&\left\{ \frac{1+\sum\limits_{n=2}^{m}a_{n}z^{n-1}-\delta
_{m+1}\sum\limits_{n=m+1}^{\infty }a_{n}z^{n-1}}{1+\sum\limits_{n=2}^{\infty
}a_{n}z^{n-1}}\right\} ,  \label{eq19}
\end{eqnarray}%
where%
\begin{equation*}
w(z)=\frac{\left( 1+\delta _{m+1}\right) \sum\limits_{n=m+1}^{\infty
}a_{n}z^{n-1}}{-\left( 2+2\sum\limits_{n=2}^{m}a_{n}z^{n-1}-\left( 1-\delta
_{m+1}\right) \sum\limits_{n=m+1}^{\infty }a_{n}z^{n-1}\right) },
\end{equation*}%
and%
\begin{equation}
\left\vert w(z)\right\vert \leq \frac{\left( 1+\delta _{m+1}\right)
\sum\limits_{n=m+1}^{\infty }a_{n}}{2-2\sum\limits_{n=2}^{m}a_{n}+\left(
1-\delta _{m+1}\right) \sum\limits_{n=m+1}^{\infty }a_{n}}\leq 1.
\label{eq20}
\end{equation}%
This last inequality is equivalent to%
\begin{equation}
\sum\limits_{n=2}^{m}a_{n}+\delta _{m+1}\sum\limits_{n=m+1}^{\infty
}a_{n}\leq 1.  \label{eq21}
\end{equation}%
It is suffices to show that the left hand side of (\ref{eq21}) is bounded
above by $\sum\limits_{n=2}^{\infty }\delta _{n}a_{n},$ which is equivalent
to%
\begin{equation*}
\sum\limits_{n=2}^{m}\left( \delta _{n}-1\right)
a_{n}+\sum\limits_{n=m+1}^{\infty }\left( \delta _{n}-\delta _{m+1}\right)
a_{n}\geq 0.
\end{equation*}%
This completes the proof of Theorem \ref{t4}.
\end{proof}

We next turn to ratios involving derivatives.

\begin{theorem}
\label{t5}If $f$ of the form (\ref{eq1}) satisfies the condition (\ref{eq8}%
), then%
\begin{equation}
\func{Re}\left\{ \frac{f^{\prime }(z)}{f_{m}^{\prime }(z)}\right\} \geq 1-%
\frac{m+1}{\delta _{m+1}},  \label{eq22*}
\end{equation}%
\begin{equation}
\func{Re}\left\{ \frac{f_{m}^{\prime }(z)}{f^{\prime }(z)}\right\} \geq 
\frac{\delta _{m+1}}{1+m+\delta _{m+1}}  \label{eq23}
\end{equation}%
where%
\begin{equation*}
\delta _{n}\geq \left\{ 
\begin{array}{cc}
1, & n=1,2,3,...,m \\ 
n\frac{\delta _{m+1}}{m+1}, & n=m+1,m+2,...%
\end{array}%
\right.
\end{equation*}

and $\delta _{n}$ is defined by (\ref{eq13}). The estimates in (\ref{eq22*})
and (\ref{eq23}) are sharp with the extremal function given by (\ref{eq14}).
\end{theorem}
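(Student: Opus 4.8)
The plan is to follow the Schwarz-function device of Theorems~\ref{t3} and~\ref{t4}, but applied to the derivatives $f'(z)=1+\sum_{n=2}^\infty n a_n z^{n-1}$ and $f_m'(z)=1+\sum_{n=2}^m n a_n z^{n-1}$ in place of $f$ and $f_m$. For \eqref{eq22*} I would introduce $w(z)$ by
\[
\frac{1+w(z)}{1-w(z)}=\frac{\delta_{m+1}}{m+1}\left\{\frac{f'(z)}{f_m'(z)}-\left(1-\frac{m+1}{\delta_{m+1}}\right)\right\},
\]
chosen so that $w(0)=0$. Clearing the denominator collapses the right-hand side to
\[
\frac{1+\sum_{n=2}^m n a_n z^{n-1}+\frac{\delta_{m+1}}{m+1}\sum_{n=m+1}^\infty n a_n z^{n-1}}{1+\sum_{n=2}^m n a_n z^{n-1}},
\]
so that solving for $w$ leaves only the tail $\sum_{n=m+1}^\infty n a_n z^{n-1}$ in its numerator. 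Estimating with $|z|<1$ and $a_n\ge0$ then gives $|w(z)|\le1$ provided
\[
\sum_{n=2}^m n a_n+\frac{\delta_{m+1}}{m+1}\sum_{n=m+1}^\infty n a_n\le1 ,
\]
and $|w(z)|\le1$ is equivalent to \eqref{eq22*}.

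For \eqref{eq23} I would repeat the construction with the multiplier $\frac{1+m+\delta_{m+1}}{1+m}$ applied to $\frac{f_m'(z)}{f'(z)}-\frac{\delta_{m+1}}{1+m+\delta_{m+1}}$. The same cancellations again place the tail $\sum_{n=m+1}^\infty n a_n z^{n-1}$ in the numerator of $w$, and $|w(z)|\le1$ reduces to exactly the inequality displayed above. Hence both estimates hinge on a single inequality.

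The crux, and the step I expect to demand the most care, is deriving that inequality from the hypothesis \eqref{eq8}, namely from $\sum_{n=2}^\infty\delta_n a_n\le1$. Since the left-hand side is dominated by $\sum_{n=2}^\infty\delta_n a_n$ precisely when
\[
\sum_{n=2}^m(\delta_n-n)a_n+\sum_{n=m+1}^\infty\left(\delta_n-\frac{n\delta_{m+1}}{m+1}\right)a_n\ge0 ,
\]
and all $a_n\ge0$, it suffices that every coefficient here be nonnegative. This requires $\delta_n\ge\frac{n\delta_{m+1}}{m+1}$ for $n\ge m+1$, as stated, and $\delta_n\ge n$ on the range $2\le n\le m$; note that differentiation attaches the weight $n$ to the $n$-th coefficient, so the lower-range condition is genuinely $\delta_n\ge n$ rather than $\delta_n\ge1$, and I would record it in that stronger form.

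For sharpness I would take the extremal function \eqref{eq14}, $f(z)=z+z^{m+1}/\delta_{m+1}$. Since its only nonconstant term beyond $z$ has degree $m+1>m$, one has $f_m(z)=z$, so $f_m'(z)\equiv1$ and $f'(z)=1+\frac{m+1}{\delta_{m+1}}z^m$. Evaluating $f'/f_m'$ at $z=re^{i\pi/m}$ and letting $r\to1^-$ drives $z^m\to-1$ and attains equality in \eqref{eq22*}; evaluating $f_m'/f'$ along the positive real axis as $z\to1^-$ gives $\frac{\delta_{m+1}}{1+m+\delta_{m+1}}$, the bound in \eqref{eq23}.
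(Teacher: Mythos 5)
Your proposal is correct and follows essentially the same route as the paper: the same M\"obius/Schwarz-function device applied to $f'$ and $f_m'$, reduction to the coefficient inequality $\sum_{n=2}^{m}na_{n}+\frac{\delta _{m+1}}{m+1}\sum_{n=m+1}^{\infty }na_{n}\leq 1$, and domination of its left-hand side by $\sum_{n=2}^{\infty }\delta _{n}a_{n}$ from (\ref{eq8}). In fact you are more careful than the paper in two respects: your normalizing factor $\frac{\delta _{m+1}}{m+1}$ is the one consistent with the displayed expansion (the paper writes $\delta _{m+1}$ in front of the bracket), and your observation that the lower range genuinely requires $\delta _{n}\geq n$ for $2\leq n\leq m$ (because differentiation attaches the weight $n$) corrects the weaker condition $\delta _{n}\geq 1$ stated in the theorem; you also supply the sharpness verification, which the paper omits.
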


\begin{proof}
Firstly, we will give proof of (\ref{eq22*}). We write%
\begin{eqnarray*}
\frac{1+w(z)}{1-w(z)} &=&\delta _{m+1}\left\{ \frac{f^{\prime }(z)}{%
f_{m}^{\prime }(z)}-\left( 1-\frac{1+m}{\delta _{m+1}}\right) \right\} \\
&=&\left\{ \frac{1+\sum\limits_{n=2}^{m}na_{n}z^{n-1}+\frac{\delta _{m+1}}{%
m+1}\sum\limits_{n=m+1}^{\infty }na_{n}z^{n-1}}{1+\sum%
\limits_{n=2}^{m}a_{n}z^{n-1}}\right\} ,
\end{eqnarray*}%
where%
\begin{equation*}
w(z)=\frac{\frac{\delta _{m+1}}{m+1}\sum\limits_{n=m+1}^{\infty
}na_{n}z^{n-1}}{2+2\sum\limits_{n=2}^{m}na_{n}z^{n-1}+\frac{\delta _{m+1}}{%
m+1}\sum\limits_{n=m+1}^{\infty }na_{n}z^{n-1}}
\end{equation*}%
and%
\begin{equation*}
\left\vert w(z)\right\vert \leq \frac{\frac{\delta _{m+1}}{m+1}%
\sum\limits_{n=m+1}^{\infty }na_{n}}{2-2\sum\limits_{n=2}^{m}na_{n}-\frac{%
\delta _{m+1}}{m+1}\sum\limits_{n=m+1}^{\infty }na_{n}}.
\end{equation*}%
Now $\left\vert w(z)\right\vert \leq 1$ if and only if%
\begin{equation}
\sum\limits_{n=2}^{m}na_{n}+\frac{\delta _{m+1}}{m+1}\sum\limits_{n=m+1}^{%
\infty }na_{n}\leq 1,  \label{eq24}
\end{equation}%
since the left hand side of (\ref{eq24}) is bounded above by is bounded
above by $\sum\limits_{n=2}^{\infty }\delta _{n}a_{n}.$

The proof of (\ref{eq23}) follows the pattern of that in Theorem (\ref{t4}).

This completes the proof of Theorem \ref{t5}.
\end{proof}

\end{document}